\documentclass[11pt,a4paper]{amsart}

\usepackage[T1]{fontenc}
\usepackage{lmodern}
\usepackage[margin=28mm]{geometry}
\usepackage{amsmath,amssymb,amsthm,mathtools}
\usepackage{microtype}
\usepackage{enumitem}
\usepackage{tikz}
\usepackage{graphicx}
\usetikzlibrary{arrows.meta,decorations.pathreplacing}
\definecolor{diagramaccent}{RGB}{29,96,104}
\usepackage[section]{placeins}
\usepackage[hidelinks]{hyperref}
\usepackage[nameinlink,noabbrev]{cleveref}
\hypersetup{pdftitle={Logarithmic edge-length ratios for planar drawings of 2-trees},
  pdfsubject={A logarithmic upper bound for the planar edge-length ratio of 2-trees}}
\setlist[enumerate]{leftmargin=*,itemsep=5pt,topsep=6pt}
\allowdisplaybreaks[2]
\newtheorem{theorem}{Theorem}[section]
\newtheorem{lemma}[theorem]{Lemma}
\newtheorem{proposition}[theorem]{Proposition}
\newtheorem{corollary}[theorem]{Corollary}
\theoremstyle{definition}
\newtheorem{definition}[theorem]{Definition}
\theoremstyle{remark}

\newcommand{\floor}[1]{\left\lfloor#1\right\rfloor}
\title[Edge-length ratios of $2$-trees]
{Logarithmic edge-length ratios for planar drawings of $2$-trees}

\author[Z. Sabaei]{Zahra Sabaei}
\address{Amirkabir University of Technology}
\email{zsabaei72@gmail.com}
\author[F. Emami]{Fatemeh Emami}
\address{Concordia University}
\email{fatima.emami13@gmail.com}
\author[Y. Tousinejad]{Yasmin Tousinejad}
\address{Department of Mathematics, Uppsala University}
\email{yasmin.tousinejad@math.uu.se}
\date{}

\begin{document}
\begin{abstract}
For a planar graph $G$ with at least one edge, let $\rho(G)$ be the
infimum of the ratio of the longest to the shortest edge length over
all planar straight-line drawings of $G$. We prove that every
$n$-vertex $2$-tree $G$ satisfies
$\rho(G)\le4+5\lfloor\log_2(2n-3)\rfloor$.
This improves the known $O(\sqrt n)$ upper bound and, together with
the lower bound of Bla\v{z}ej, Fiala and Liotta, shows that the maximum
of $\rho(G)$ over $n$-vertex $2$-trees is $\Theta(\log n)$.
The proof adapts Biedl's recursive bar construction using nonintegral
heights and applies a height-preserving straightening theorem.
\end{abstract}
\maketitle

\section{Introduction}
\label{sec:introduction}

Relations between edge lengths in graph drawings have been studied
since at least the late
1980s~\cite{BhattCosmadakis1987,EadesWormald1990}, and controlling
differences between these lengths is important both for the readability
of planar straight-line drawings~\cite{DiBattista1999} and for
applications such as unit disk graph approximation~\cite{Chen2011}.
However, requiring all edges to have the same length leads to a
difficult decision problem, as Eades and Wormald proved that deciding
whether a biconnected planar graph admits such a drawing is
NP-hard~\cite{EadesWormald1990}.

These considerations motivate the study of how small the ratio between
the longest and shortest edge lengths can be for particular graph
classes. Lazard, Lenhart and Liotta~\cite{LazardLenhartLiotta2019}
studied this ratio for outerplanar graphs and asked whether it is
bounded by a constant for $2$-trees. For $n$-vertex $2$-trees, Borrazzo
and Frati~\cite{BorrazzoFrati2020} proved an $O(n^{0.695})$ upper bound,
whereas Bla\v{z}ej, Fiala and Liotta~\cite{BFL2021} established a lower
bound of order $\log n$, thereby showing that no constant bound is
possible. More recently, Bekos et al.~\cite{Bekos2026} improved the upper
bound to $O(\sqrt n)$ through their work on weakly leveled planar
drawings and explicitly asked whether the remaining gap could be closed.

We close this gap by proving that every $n$-vertex $2$-tree admits a
planar straight-line drawing whose edge-length ratio is bounded by a
constant times $\log n$. The proof uses a recursive bar construction
in which the heights of adjacent bars differ by at least one unit,
while those of nonadjacent bars need not be separated by a fixed
positive amount, allowing the total height to remain logarithmic.
Applying a height-preserving straightening theorem and then compressing
the horizontal coordinate gives the required drawings, since the
vertical separation prevents short edges and the logarithmic total
height controls the longest edge.

\subsection{Notation and main results}

All graphs in this paper are finite and simple. For a graph $G$, let
$V(G)$ and $E(G)$ denote its vertex set and edge set, respectively, and
let $n\coloneqq|V(G)|$. Let $N(v)$ denote the set of neighbors of a
vertex $v$ in the graph under consideration. A \emph{$2$-tree} is
obtained from a single edge by repeatedly adding a new vertex whose
only neighbors are the two endpoints of an existing edge.
Thus, for a $2$-tree
\begin{equation}
  |E(G)|=1+2(n-2)=2n-3.
  \label{eq:edge-count}
\end{equation}

A \emph{planar straight-line drawing} assigns distinct points of
$\mathbb R^2$ to the vertices and represents each edge by the segment
between its endpoints. No edge contains a nonincident vertex, and two edges
intersect only at a common endpoint. We allow arbitrary real coordinates
and do not fix a planar embedding in advance.

Let $G$ be a planar graph with at least one edge. For a planar
straight-line drawing $D$ of $G$ and an edge $e\in E(G)$, let
$|e|_D$ denote the Euclidean length of $e$. The edge-length ratio
of $D$ and the planar edge-length ratio of $G$ are, respectively,
\begin{equation}
  r(D)\coloneqq
  \frac{\max\{|e|_D:e\in E(G)\}}
       {\min\{|e|_D:e\in E(G)\}},
  \qquad
  \rho(G)\coloneqq\inf_D r(D),
  \label{eq:ratio}
\end{equation}
where the infimum ranges over all planar straight-line drawings of $G$.
Using an infimum avoids requiring a drawing with ratio $\rho(G)$.
For $n\ge2$, let $R_2(n)$ denote the maximum planar edge-length ratio among all $n$-vertex $2$-trees:
\begin{equation}
  R_2(n)\coloneqq
  \max\{\rho(G):G\text{ is a $2$-tree with $n$ vertices}\}.
  \label{eq:extremal}
\end{equation}
There are finitely many such graphs up to isomorphism, so the maximum in
\eqref{eq:extremal} exists.

\begin{theorem}
\label{thm:main}
For every $2$-tree $G$ with $n\ge2$ vertices,
\begin{equation}
  \rho(G)\le4+5\floor{\log_2(2n-3)}.
  \label{eq:main-upper}
\end{equation}
Equivalently, for every $\delta>0$, there is a planar straight-line
drawing of $G$ whose edge-length ratio is less than
$4+5\lfloor\log_2(2n-3)\rfloor+\delta$.
\end{theorem}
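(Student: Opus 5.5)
The plan follows the outline in the introduction: build a bar-visibility-type representation of $G$ in which vertices are horizontal bars at real heights. Adjacent vertices get heights differing by at least $1$, and all heights lie in an interval of length $h\le 2+\tfrac52\floor{\log_2(2n-3)}$ (or a comparable bound). Then straighten it into a planar straight-line drawing that keeps each vertex at the height of its bar. Finally compress horizontally. Scale $x$-coordinates by a factor $\varepsilon\to0$; planarity is preserved by any affine map. Each edge $uv$ then has length tending to $|y(u)-y(v)|$, which lies in $[1,h']$, where $h'$ is the total height range. So for small $\varepsilon$ the ratio is below $h'+\delta$, and taking the infimum gives \eqref{eq:main-upper}. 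For this to work, no edge may be horizontal, which is exactly the condition that adjacent vertices have different heights, separated by at least one unit.

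The core is the recursive bar construction, adapting Biedl's. I would root $G$ at its initial edge $st$ and view it as built from ``edge-gadgets'': for an edge $uv$, the subgraph hanging off $uv$ consists of all vertices $w$ attached to $uv$, each with its own two hanging subgraphs on $uw$ and $wv$. Define the size of a hanging subgraph as its number of edges. The total is $2n-3$ by \eqref{eq:edge-count}, which explains the $\log_2(2n-3)$. The recursive claim is as follows. Suppose the bars of $u$ and $v$ are given at heights $y_u<y_v$ with $y_v-y_u\ge1$, with a prescribed overlap region, and suppose the hanging subgraph on $uv$ has $m$ edges. Then it can be drawn inside a region whose heights extend at most $c\floor{\log_2 m}+c'$ beyond $[y_u,y_v]$. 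Within this region each child $w$ is placed either strictly between $u$ and $v$, when the gap allows, or above/below. I would do heavy-path style recursion: for each $w$ attached to $uv$, the larger of its two child subgraphs is drawn with the same additive budget. The smaller child has at most half the edges and is allowed one more level of escape. The nonintegral heights are essential here. Heavy children must be placed between $u$ and $v$ at heights that can be squeezed arbitrarily close to one another, since nonadjacent bars need no separation. Only the unit separation from $u$ and $v$ themselves costs height. Each halving of size then costs a constant, about $5/2$ on each side or $5$ total, giving $h\le4+5\floor{\log_2(2n-3)}$ after accounting for the base edge.

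Next comes the straightening step. I would invoke the height-preserving straightening theorem referenced in the abstract: a planar bar-visibility (or $y$-monotone, weakly leveled) representation in which each edge joins bars at distinct heights yields a planar straight-line drawing with the same $y$-coordinates. To apply it I need to verify its hypotheses from the construction. These are that the representation is planar, that each edge is a vertical visibility between its two bars, and that the two heights of each edge are distinct.

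The main obstacle will be the recursive invariant. It must simultaneously guarantee the unit gap for every edge, including edges from a heavy child $w$ placed between $u$ and $v$ to $u$ and $v$. It must also keep the light children's escapes nested so the additive costs telescope to $5\floor{\log_2 m}$ instead of accumulating along heavy paths. I would first try an invariant stating that the hanging subgraph on $uv$ uses heights in $[y_u-a(m),\,y_v+a(m)]$, where $a(m)=\tfrac52\floor{\log_2 m}+O(1)$, and check that placing heavy children inside $(y_u+1,y_v-1)$ fails when the gap is less than $2$. That failure would force heavy children outside as well, at distance $1$, with their own children compressed. Making this case analysis consistent is where the constant $5$ will be determined.
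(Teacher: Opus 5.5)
Your outer framework (unit-separated bars of total height $H$, height-preserving straightening, horizontal compression) is exactly \cref{lem:conversion}. The gap is that the recursive construction, which is the real content of the theorem, is left open, and the invariant you suggest does not close the induction. It is meant to hold for every gap $y_v-y_u\ge1$. When the gap is less than $2$, a vertex $w$ attached to $uv$ is adjacent to both terminals, so it cannot lie between them. Squeezing the various $w$ against one another does not help, since each needs unit distance from both $u$ and $v$. So $w$ must satisfy $y_w\ge y_v+1$ or $y_w\le y_u-1$. This case is unavoidable along a long heavy path, because each separator placed between $u$ and $v$ leaves the heavy child a gap at least $1$ smaller. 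The terminal interval of the heavy child $F_{uw}$ or $F_{wv}$ then already reaches $y_v+1$. Giving that child ``the same additive budget'' relative to its own interval lets it reach $y_v+1+a(m')$, which exceeds $y_v+a(m)$ whenever $\lfloor\log_2 m'\rfloor=\lfloor\log_2 m\rfloor$. You also never say how the bars are arranged horizontally. For instance, the column of $uw$ must pass level $y_v$ outside $B_v$, and bars at squeezed heights must avoid neighbouring subdrawings.

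The missing idea is the paper's normalized invariant. Each component is drawn in its own box with its terminals on the top and bottom lines, except that a terminal may sit one unit inside, in a boundary band, under the side conditions (B3)--(B5). These conditions guarantee that the terminal can later be pushed to the boundary at cost $2$ (\cref{lem:release}). After that, all nonterminal bars are at least one unit inside, so the released box fits between any two sufficiently separated terminal bars, and boxes can be stretched for free (\cref{lem:enlarge}).

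In a parallel composition only the largest branch is free, and the others pay $4$. In a series composition $F_{sx}\cup F_{xt}$, the larger half keeps its representation, stretched if necessary, with $x$ in the bottom band. The vertex $t$ goes to the other level $1-y_x$ of that band, so the edge $xt$ costs no height. The smaller half is folded upward. The vertices $u_i$ attached to $xt$ get the distinct heights $H-1-i/(k+1)$. The released pieces $A_i=F_{xu_i}$ and $B_i=F_{u_it}$ are stretched from the bottom band to these heights and ordered $A_1,\dots,A_k$, gap, $B_k,\dots,B_1$, so that the bar of $u_i$ passes only over lower boxes. Nonintegral heights are needed here, for the pairwise nonadjacent $u_i$, not for heavy children. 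Every additive term is then charged to a piece with at most $\lfloor m/2\rfloor$ edges, and $f(\lfloor m/2\rfloor)+5=f(m)$ with $f(m)=4+5\lfloor\log_2 m\rfloor$ closes the induction.

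A smaller point: in the form used here, Pach and T\'oth's theorem needs a point-and-curve drawing with $y$-monotone edges and \emph{all} vertex heights pairwise distinct, not just the two endpoints of each edge. You must therefore contract bars to points inside thin rectangles and perturb heights by less than $\varepsilon$. This costs only $O(\varepsilon)$ in the ratio and is harmless for the infimum.
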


The lower bound of Bla\v{z}ej, Fiala and Liotta~\cite{BFL2021}, together
with \cref{thm:main}, gives the following consequence. A short deduction
of the explicit lower bound is included in \cref{sec:consequences}.

\begin{corollary}
\label[corollary]{cor:order}
For every integer $n\ge3$,
\begin{equation}
  \floor{\frac16\log_{10} \left(\frac{3n-4}{5}\right)}
  \le R_2(n)
  \le4+5\floor{\log_2(2n-3)}.
  \label{eq:two-bounds}
\end{equation}
Consequently, $R_2(n)=\Theta(\log n)$.
\end{corollary}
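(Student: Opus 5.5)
Here the statement is \cref{cor:order}. The upper bound is immediate from \cref{thm:main}: every $n$-vertex $2$-tree $G$ satisfies $\rho(G)\le4+5\lfloor\log_2(2n-3)\rfloor$. Taking the maximum over the finitely many such $G$ bounds $R_2(n)$ from above.

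For the lower bound I will use the construction of Bla\v{z}ej, Fiala and Liotta~\cite{BFL2021}. They exhibit, for each integer $k\ge1$, a specific $2$-tree $G_k$ with planar edge-length ratio at least of order $k$. I will treat the precise form of their statement as the input. My understanding is that $G_k$ is built by iterated stacking: start from an edge, attach many triangles on each edge, and recurse to depth about $k$. The guarantee is that every planar straight-line drawing of $G_k$ forces a nested chain of triangles in which edge lengths shrink by a constant factor at each level.

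The first step is to fix the exact quantitative form of their bound. I expect it to read: if $n_k=|V(G_k)|$, then $\rho(G_k)\ge k$, with $n_k$ given by a geometric expression roughly $n_k=(5\cdot10^{6k}+4)/3$. This is the inversion of $k=\frac16\log_{10}\frac{3n-4}{5}$.

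The second step is a monotonicity argument, needed because $n$ need not equal some $n_k$. Given $n\ge3$, let $k=\lfloor\frac16\log_{10}\frac{3n-4}{5}\rfloor$. If $k\le0$ there is nothing to prove, since $\rho\ge1>0$ for any graph with an edge. Otherwise $n_k\le n$. Extend $G_k$ to an $n$-vertex $2$-tree $G$ by repeatedly stacking a new vertex on an existing edge. Every drawing of $G$ restricts to a drawing of $G_k$. The longest edge of $G$ is at least the longest edge of $G_k$, and the shortest edge of $G$ is at most the shortest edge of $G_k$. Hence $r(D)\ge r(D|_{G_k})\ge\rho(G_k)\ge k$, and therefore $R_2(n)\ge\rho(G)\ge k$.

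Finally, $\Theta(\log n)$ follows because both bounds are linear in $\log n$ for large $n$, up to additive constants. The main obstacle is the first step: pinning down the precise vertex count of the Bla\v{z}ej--Fiala--Liotta family and the precise form of their bound, so that the constants $\frac16$, $10$, $3n-4$ and $5$ come out exactly. If their count only gives $n_k\le(5\cdot10^{6k}+4)/3$, that still suffices, by the monotonicity step.
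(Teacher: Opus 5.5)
Your upper bound, the padding to $n$ vertices by further stacking, the restriction inequality $r(D|_F)\le r(D)$ giving $\rho(F)\le\rho(G)$, the trivial case where the floor is $0$, and the $\Theta(\log n)$ conclusion all match the paper. The gap is the quantitative input for the lower bound. You obtain ``$\rho(G_k)\ge k$ with $|V(G_k)|=(5\cdot10^{6k}+4)/3$'' by inverting the target inequality. That is not the form in which the result of \cite{BFL2021} is available. It is exactly the part of the corollary that needs an argument, so as written your proof is circular precisely where the constants $\frac16$, $10$, $3n-4$ and $5$ are determined.

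The missing deduction is short. Let $G_0$ be a triangle. At each stage $h\ge1$, stack five new vertices on every edge created at stage $h-1$. Each such edge produces ten new edges, so stage $j+1$ adds $15\cdot10^j$ vertices and
$|V(G_h)|=3+15\sum_{j=0}^{h-1}10^j=(5\cdot10^h+4)/3$.
A family rooted at an edge, as you describe it, has a different vertex count and would not give these constants. The proof of Theorem~1 in the conference version of \cite{BFL2021} gives $\rho(G_{k+4r})\ge r$ for $r\ge2$, where $k=\lceil 2+2\log_2 r\rceil$; your $k$ plays the role of $r$ here. Since $r\le 2^{r-1}$, one has $k\le 2r$, hence $k+4r\le 6r$. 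Because $G_{k+4r}$ is a subgraph of $G_{6r}$, your monotonicity step gives $\rho(G_{6r})\ge r$, and for $r=1$ this holds trivially. Choosing $r=\floor{\frac16\log_{10}\frac{3n-4}{5}}$ makes $|V(G_{6r})|\le n$, and your padding argument then finishes the proof. So the $10$ comes from the branching, the $6$ from $k+4r\le 6r$, and $(3n-4)/5$ from inverting the vertex count.
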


The lower estimate in \eqref{eq:two-bounds} concerns the largest ratio among
graphs of a given order; it is not a lower bound for every $2$-tree.
The coefficient $5$ in \eqref{eq:main-upper} comes from the construction,
and no claim is made that it is optimal.

\subsection{Proof overview}

Biedl~\cite[Section~3]{Biedl2011} develops recursive bar constructions
for small drawings of series-parallel graphs. Our proof uses her arrangement of
series and parallel components, including the reversed order of the smaller
components in the series step. The change is in the choice of heights and
the conditions at the two terminal bars. We use only vertical edge segments
and allow nonintegral bar heights. In each height recurrence, the
contribution of a largest component has no additive term. Every
component whose contribution has an additive term contains at most
half the edges of the component being constructed. Several nonadjacent
bars can then be placed at distinct rational heights within one unit
of vertical space.

In \cref{sec:conversion}, the height-preserving straightening theorem
of Pach and T\'oth~\cite{PachToth2004} reduces the problem to constructing
bars of bounded total height with at least unit separation along every
edge. We give this construction and prove \cref{thm:main} in
\cref{sec:construction}. In \cref{sec:consequences}, we deduce
\cref{cor:order}, extend the upper bound to graphs with treewidth at
most $2$, and explain the role of nonintegral heights.

\section{From bars to straight-line drawings}
\label{sec:conversion}

\begin{definition}
A \emph{bar representation} assigns a horizontal closed segment $B_v$ to
each vertex $v$ and a vertical segment to each edge $uv$, joining $B_u$ to
$B_v$. The bars are pairwise disjoint, no edge segment meets a nonincident
bar, and edge segments have pairwise disjoint interiors. Only the specified
edge segments represent edges; other vertical connections between bars
need not represent edges of the graph.
\end{definition}

\begin{lemma}
\label[lemma]{lem:conversion}
Suppose that a graph $G$ with at least one edge has a bar representation
whose bar heights belong to $[0,H]$, and that the endpoint heights of every
edge differ by at least $1$. Then $\rho(G)\le H$. Moreover, for every
$\delta>0$, there is a planar straight-line drawing of $G$ with ratio less
than $H+\delta$.
\end{lemma}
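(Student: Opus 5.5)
We convert the bar representation into a planar polyline-free drawing in which every vertex keeps the height of its bar, and then squash the horizontal direction. Contract each bar $B_v$ to a point of $B_v$ and keep each edge as a $y$-monotone curve. Concretely, inside a thin neighbourhood of $B_v$, join the chosen point of $B_v$ to the attachment points of the incident edge segments by short curves that stay within height $\pm\epsilon$ of $B_v$. Do this for every $v$. Since the edge segments are vertical, run between different bar heights, and have disjoint interiors, the result is a planar drawing of $G$ in which each edge is $y$-monotone (strictly so after a small perturbation). Each vertex $v$ lies at the height $y_v$ of $B_v$, and each edge is drawn as an $x$-monotone-in-$y$ curve from $y_u$ to $y_v$.

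The main step is to invoke the height-preserving straightening theorem of Pach and T\'oth~\cite{PachToth2004}. A planar drawing in which every edge is a $y$-monotone curve can be redrawn as a planar straight-line drawing with each vertex keeping its $y$-coordinate. The obstacle is checking the hypotheses. The curves must be strictly $y$-monotone and have endpoints at different heights, which holds because $|y_u-y_v|\ge1$ for every edge. Distinct vertices may share a height, and we must check that the theorem permits this. If it does not, we first perturb the bar heights slightly, keep all edge separations at least $1-\eta$, and absorb the loss into $\delta$.

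Now take the resulting straight-line drawing and apply the map $(x,y)\mapsto(\lambda x,y)$ with $\lambda>0$ small. This affine map preserves planarity and straightness. Every edge has vertical extent $|y_u-y_v|\in[1,H]$, using that all heights lie in $[0,H]$. Hence its length lies in $\bigl[1,\sqrt{H^2+\lambda^2W^2}\bigr]$, where $W$ bounds all horizontal extents. Choosing $\lambda$ with $\sqrt{H^2+\lambda^2W^2}<H+\delta$ gives ratio less than $H+\delta$. This holds for every $\delta>0$, so $\rho(G)\le H$. If perturbation was used, the ratio is at most $(H+\eta)/(1-\eta)$ plus the squash error, which is still below $H+\delta$ for small $\eta$.
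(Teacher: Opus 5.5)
Your proposal is correct and follows the paper's proof. Both arguments shrink each bar to a point inside a thin neighbourhood to obtain a planar drawing with strictly $y$-monotone edges, perturb the vertex heights to be pairwise distinct (your hedge anticipates this, and it is what the paper does before applying Theorem~2 of Pach and T\'oth after a $90^\circ$ rotation), straighten, and then compress horizontally. The one step to make precise is strict monotonicity: truncate the incident edges at heights $y_v\pm\varepsilon$ and join the chosen point to the truncation points by straight segments, which also makes planarity inside each neighbourhood immediate.
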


\begin{proof}
Surround the bars by pairwise disjoint thin rectangles, each extending
slightly beyond the endpoints of its bar. Since the representation is finite,
the rectangles can avoid all nonincident edge segments, and the two
rectangles incident with any edge can be vertically separated. Truncate
each vertical edge at the boundaries of its endpoint rectangles.

In each rectangle, choose an interior point to represent its vertex and
join it by straight segments to the endpoints of the truncated incident
edges. The endpoints on the top side of a rectangle are distinct, since
otherwise two original edge interiors would overlap; the same holds on the
bottom side. Segments joining the chosen point to the top side intersect
only at that point, and the same is true for the bottom side. The two
groups lie on opposite sides of the horizontal line through the point.
The resulting drawing is therefore planar, and every edge is strictly
$y$-monotone.

Fix $0<\varepsilon<1/2$ and use the openness of the rectangles to choose
interior points at pairwise distinct heights, each differing by less
than $\varepsilon$ from the height of its bar. The resulting vertex
heights lie in $[-\varepsilon,H+\varepsilon]$, while the endpoint heights
of every edge differ by at least $1-2\varepsilon$. Since the chosen
heights are pairwise distinct, a rotation by $90$ degrees allows us to
apply Theorem~2 of Pach and T\'oth~\cite{PachToth2004}; rotating the
resulting drawing back then gives a planar straight-line drawing
with the same vertex heights.

Compress the horizontal coordinate by a positive factor so that the width
is at most $\varepsilon$. This invertible linear map preserves planarity.
For the resulting drawing $D_\varepsilon$,
\begin{equation}
  1-2\varepsilon\le |e|_{D_\varepsilon}
  \le\sqrt{(H+2\varepsilon)^2+\varepsilon^2}
  \qquad(e\in E(G)).
  \label{eq:edge-estimates}
\end{equation}
Consequently, \eqref{eq:ratio} and \eqref{eq:edge-estimates} give
\begin{equation}
  \rho(G)\le r(D_\varepsilon)
  \le\frac{\sqrt{(H+2\varepsilon)^2+\varepsilon^2}}
           {1-2\varepsilon},
  \qquad
  \lim_{\varepsilon\downarrow0}
  \frac{\sqrt{(H+2\varepsilon)^2+\varepsilon^2}}
       {1-2\varepsilon}=H.
  \label{eq:compression-limit}
\end{equation}
Both conclusions follow from \eqref{eq:compression-limit}.
\end{proof}

An example of the conversion is given in \cref{fig:conversion}.

\begin{figure}[htbp]
  \centering
  \includegraphics[width=0.9\linewidth]{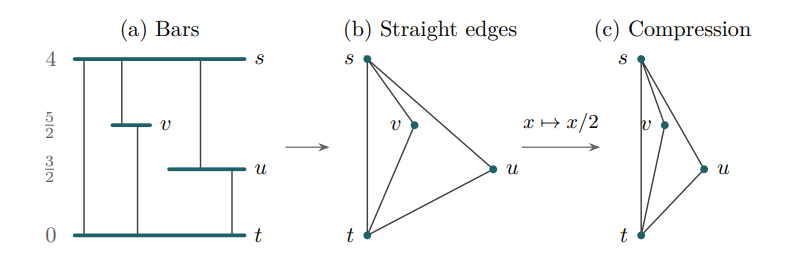}
 \caption{The conversion for the $2$-tree with vertices $s,t,u,v$ and edges
$st,su,tu,sv,tv$.
The straight-line drawing keeps the four bar heights, and horizontal
compression keeps every vertical difference along an edge. All three
representations are planar. The factor $1/2$ in (c) is illustrative;
the width can be made arbitrarily small as in \cref{lem:conversion}.}
\label{fig:conversion}
\end{figure}

\section{The recursive construction}
\label{sec:construction}

\subsection{Components and their terminals}
\label{subsec:components}

Fix a construction sequence of the $2$-tree. Each vertex other than the
initial two vertices is attached to one existing edge, and the two edges
created at that step are recorded as children of that edge. For each edge
$ab$, let $F_{ab}$ consist of $ab$ and all edges obtained from it by repeated
child steps, together with their endpoints. Its \emph{terminals} are $a,b$.
If $x_1,\ldots,x_k$ are the vertices attached directly to $ab$, then
\begin{equation}
  F_{ab}=ab\;\cup\;\bigcup_{i=1}^{k}
  \bigl(F_{ax_i}\cup F_{x_ib}\bigr),
  \label{eq:decomposition}
\end{equation}
where $ab$ on the right denotes the graph consisting of that one edge.
If $k=0$, the union is empty and $F_{ab}=ab$.

We call $F_{ab}$ an \emph{edge component}, and call each
$F_{ax_i}\cup F_{x_ib}$ in \eqref{eq:decomposition} a \emph{series
component}, with terminals $a,b$. Its two edge components meet only at
$x_i$. Different series components in \eqref{eq:decomposition} meet only
at $a,b$, so the right side of that equation is a \emph{parallel
composition} of the edge $ab$ and those series components. All their edge
sets are disjoint. These facts follow from the construction sequence, since
every new vertex has only the two endpoints of its attachment edge as older
neighbors. The edge component of the initial edge is the whole graph.

For any component $C$, define $m(C)\coloneqq|E(C)|$. We construct bar
representations of edge components and series components simultaneously by
induction on $m(C)$. Terminals are ordered, with $s$ denoting the upper
terminal and $t$ the lower terminal. Vertical reflection allows either
ordering of a given pair of terminals.

\subsection{The boundary conditions}
\label{subsec:conditions}

For each bar $B_v$, let $y_v$ denote its height and let $\ell_v,r_v$
denote the horizontal coordinates of its left and right endpoints,
respectively, so that
\begin{equation}
  B_v=[\ell_v,r_v]\times\{y_v\}.
  \label{eq:bar}
\end{equation}
Let $c_{uv}$ be the horizontal coordinate of the vertical segment for the
edge $uv$. The representation lies in a rectangle $[0,W]\times[0,H]$,
where $W>0$ and $H\ge4$. We maintain the following conditions.

\begin{enumerate}[label=\textnormal{(B\arabic*)},ref=\textnormal{(B\arabic*)}]
\item\label{cond:proper}
The representation satisfies the definition in \cref{sec:conversion}, has
bars at both heights $0,H$, and every edge has endpoint-height difference
at least $1$. All edge columns are pairwise distinct.
\item\label{cond:right}
The bars of $s,t$ end at $W$, and every other bar ends strictly before $W$.
\item\label{cond:bands}
The terminal heights satisfy $y_s\in\{H-1,H\}$ and $y_t\in\{0,1\}$.
No bar has height in either $(0,1)$ or $(H-1,H)$.
\item\label{cond:bottom}
If $y_t=1$, then $t$ has a unique neighbor $x$ below it, at height $0$.
For $c\coloneqq c_{tx}$,
\begin{equation}
  r_x=c,\qquad
  r_v\le c\ \text{if }y_v=0,\qquad
  c_{tu}>c\ \text{if }u\in N(t)\setminus\{x\}.
  \label{eq:lower-condition}
\end{equation}
\item\label{cond:top}
If $y_s=H-1$, then $s$ has a unique neighbor $x$ above it, at height $H$.
For $c\coloneqq c_{sx}$,
\begin{equation}
  r_x=c,\qquad
  r_v\le c\ \text{if }y_v=H,\qquad
  c_{su}>c\ \text{if }u\in N(s)\setminus\{x\}.
  \label{eq:upper-condition}
\end{equation}
\end{enumerate}

Allowing a terminal to be one unit inside the boundary makes it possible
to add an edge in series without increasing the height. Conditions
\ref{cond:bottom} and \ref{cond:top} allow that terminal to be moved outside
the rectangle later. Positive horizontal scaling and horizontal translation preserve
all five conditions, and vertical reflection exchanges the terminal roles.
Throughout the construction, the horizontal intervals of different child
rectangles are separated by positive gaps.

\begin{lemma}[Increasing the height]
\label[lemma]{lem:enlarge}
A representation satisfying \ref{cond:proper}--\ref{cond:top} at height
$H$ can be enlarged to any height $H'\ge H$ while preserving these
conditions, without decreasing any endpoint-height difference
along an edge.
\end{lemma}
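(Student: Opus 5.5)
We prove the statement with a piecewise vertical stretch that keeps the two boundary bands rigid. Let $\Delta\coloneqq H'-H\ge0$; if $\Delta=0$ there is nothing to do. Keep the horizontal coordinates of all bars and edge columns. Replace each height $y$ by
\[
  \phi(y)\coloneqq
  \begin{cases}
    y, & y\le 1,\\
    1+(y-1)\dfrac{H'-2}{H-2}, & 1\le y\le H-1,\\
    y+\Delta, & y\ge H-1.
  \end{cases}
\]
Since $H\ge4$, the middle slope $(H'-2)/(H-2)$ is at least $1$. So $\phi$ is a continuous, strictly increasing bijection from $[0,H]$ onto $[0,H']$. It fixes the band $[0,1]$, translates the band $[H-1,H]$ onto $[H'-1,H']$, and never shrinks any interval: $\phi(y_2)-\phi(y_1)\ge y_2-y_1$ for $y_1\le y_2$. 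This last property already gives the required monotonicity of every endpoint-height difference along an edge. In particular, each such difference stays at least $1$.

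Next we check \ref{cond:proper}. Apply $\phi$ to every bar height and to both endpoints of every vertical edge segment. Because $\phi$ is strictly increasing and the $x$-coordinates do not change, the map $(x,y)\mapsto(x,\phi(y))$ is a homeomorphism of the rectangle onto $[0,W]\times[0,H']$. It sends horizontal segments to horizontal segments and vertical segments to vertical segments. Disjointness of bars, the absence of incidences between edge segments and nonincident bars, and disjointness of edge interiors are therefore preserved. The edge columns are unchanged, so they remain pairwise distinct. Bars at heights $0$ and $H$ go to $0$ and $H'$. Condition \ref{cond:right} involves only horizontal data, so it is untouched.

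For \ref{cond:bands}, the terminal heights $y_t\in\{0,1\}$ are fixed by $\phi$, and $y_s\in\{H-1,H\}$ goes to $\{H'-1,H'\}$. No old height lies in $(0,1)$ or $(H-1,H)$. Since $\phi$ is a bijection that maps $(0,1)$ onto itself and $(H-1,H)$ onto $(H'-1,H')$, no new height lies in those open bands.

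The remaining step is the one needing care: \ref{cond:bottom} and \ref{cond:top}. These are stated in terms of the set of bars at height $0$ (respectively $H$), the relation ``below/above $t$'' (respectively $s$), and horizontal coordinates. Because $\phi$ is strictly increasing, it preserves the relative vertical order of all bars. Because $\phi^{-1}(\{0\})=\{0\}$ and $\phi^{-1}(\{H'\})=\{H\}$, it also preserves exactly which bars sit at the extreme heights. The conditions $y_t=1$ and $y_s=H-1$ are equivalent before and after the map. The unique neighbor $x$ and the column $c$ are the same, and all inequalities in \eqref{eq:lower-condition} and \eqref{eq:upper-condition} are unchanged. This completes the plan.

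The only point where this could fail is the choice of the stretch: a uniform scaling $y\mapsto yH'/H$ would break \ref{cond:bands}, because it moves height $1$ and height $H-1$ off the band boundaries. That is why $\phi$ keeps both bands rigid and puts all the stretching in $[1,H-1]$.
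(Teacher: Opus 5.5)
Your proof is correct and follows essentially the same idea as the paper: an order-preserving vertical reparametrization that fixes the lower band $[0,1]$, translates the upper band $[H-1,H]$ by $H'-H$, and leaves all horizontal coordinates unchanged. The paper instead inserts a bar-free strip at one height $q\in(1,H-1)$ that avoids all bar heights, shifting everything above $q$ and extending the edges that cross it, whereas your continuous stretch of $[1,H-1]$ obtains preservation of incidences and disjointness directly from applying a homeomorphism.
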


\begin{proof}
Choose $q\in(1,H-1)$ different from every bar height. Move each bar
above $q$ upward by $H'-H$, and extend the edges crossing the line
of height $q$. The inserted horizontal strip contains no bars,
so incidences and disjointness are preserved, while every
endpoint-height difference stays the same or increases. Bars in
the lower boundary band remain fixed, whereas those in the upper
boundary band move upward by $H'-H$. The horizontal coordinates
of bar endpoints and edge columns remain unchanged, so all five
conditions are preserved.
\end{proof}

\begin{lemma}[Moving the terminals to the boundary]
\label[lemma]{lem:release}
From a representation satisfying \ref{cond:proper}--\ref{cond:top} at height
$H$, one obtains a bar representation of height $H+2$ in which the terminal
bars span the full width at heights $0,H+2$. Every other bar has height
in $[1,H+1]$. Every edge has endpoint-height difference at least $1$, and
the edge columns remain distinct.
\end{lemma}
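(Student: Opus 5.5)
The plan is to shift everything up by one unit and then handle the two terminals separately, using \ref{cond:bottom} and \ref{cond:top} whenever a terminal sits one unit inside the boundary. First translate the representation upward by $1$, so that bar heights lie in $[1,H+1]$. By \ref{cond:bands}, $t$ is now at height $1$ or $2$, $s$ at height $H$ or $H+1$, and every nonterminal bar lies in $[1,H+1]$. The nonterminal bars will not move vertically, so the only task is to bring $t$ down to height $0$ and $s$ up to height $H+2$, and then extend both terminal bars to the full width. The bottom and the top are treated symmetrically, the top by vertical reflection, so I describe only the bottom.

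\emph{Case $y_t=0$ originally (now height $1$).} Lower $B_t$ to height $0$ and lengthen its incident edge segments, all of which go upward, since $t$ is the lowest point. Every endpoint-height difference at $t$ grows by $1$. The vacated strip $[0,1)$ contains no other bar, because no bar has original height in $(0,1)$ and the other original height-$0$ bars now sit at height $1$. The bars originally at height $0$ are disjoint from $B_t$, and their edges go up, so the lengthened segments of $t$ start from points of $B_t$ and do not cross them. More precisely, a segment from $B_t$ rising through height $1$ at column $c$ would meet a height-$1$ bar only if that bar contains $c$; but originally that column started on $B_t$ at the same height, and disjoint bars on one horizontal line do not share a column. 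Hence no new crossing occurs.

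\emph{Case $y_t=1$ originally (now height $2$).} Here \ref{cond:bottom} applies: $t$ has a unique lower neighbour $x$ at original height $0$, with $r_x=c$, every original height-$0$ bar ends at or before $c$, and every other edge at $t$ has column greater than $c$. Lower $B_t$ to height $0$ and replace it by the bar $[\ell,W]\times\{0\}$, where $\ell$ is chosen to the left of everything. This may pass beneath the height-$1$ bars, which were the old height-$0$ bars. The edge $tx$ becomes the column $c$ from height $0$ up to $B_x$ at height $1$, with difference $1$. For each $u\in N(t)\setminus\{x\}$, extend its segment at column $c_{tu}>c$ downward from height $2$ to $0$. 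The step I expect to need the most care is checking that these extended segments avoid the old height-$0$ bars (now at height $1$): those bars satisfy $r_v\le c<c_{tu}$, so they are disjoint from the extensions. Nothing else lies strictly between heights $0$ and $2$ except the old height-$0$ bars and nothing in $(1,2)$, by \ref{cond:bands}. The extended segments also cross no bars or edges strictly between heights $0$ and $1$, since that strip is empty. Differences at $t$ only increase, and $tx$ has difference exactly $1$. Edges incident to $t$ from old height-$0$ bars could only be $tx$, by uniqueness.

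\emph{Extending to full width.} Finally, with $B_t$ at height $0$ and $B_s$ at height $H+2$ and no other bar at these extreme heights, extend both to $[\min,\max]$ of the horizontal range (\ref{cond:right} already puts their right ends at $W$). Only edges incident to $s$ or $t$ touch these heights, so the extensions meet no other segment. Edge columns were never changed, so they remain distinct by \ref{cond:proper}. This gives the claimed representation of height $H+2$.
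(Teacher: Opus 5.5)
Your proposal is correct and follows essentially the same route as the paper. The paper lowers $t$ to $-1$ and raises $s$ to $H+1$ before translating rather than after, and uses the unique lower neighbor, the bound $r_v\le c$, and the strict column inequality from \ref{cond:bottom} and \ref{cond:top} exactly as you do. The one point the paper makes explicit that you leave implicit is that the top and bottom changes are compatible. Since $H\ge4$, the exceptional neighbor of $t$ cannot be $s$, and vice versa, so an edge $st$, if present, is protected by the strict column inequalities at both of its ends.
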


\begin{proof}
In the original coordinates, move $t$ to height $-1$ and $s$ to height
$H+1$, extend their bars across $[0,W]$, and reconnect incident edges in
their original columns. We check that this gives a valid new representation;
no continuous movement between the two representations is required.

If $y_t=0$, extending its incident edges downward introduces no intersection
because there are no bars below height $0$. Suppose instead that $y_t=1$,
and let $x,c$ be as in \eqref{eq:lower-condition}. For every neighbor
$u\ne x$, the new part of the segment for $tu$ crosses height $0$ strictly
to the right of all bars at that height. It crosses no bar in $(0,1)$ by
\ref{cond:bands}. The edge $tx$ is replaced by the segment in column $c$
between heights $-1$ and $0$, so its new length is $1$. Thus the
lower-terminal change creates no unwanted edge--bar intersection. The
upper-terminal change is justified in the same way by
\eqref{eq:upper-condition}.

The two changes are compatible. Since $H\ge4$, an exceptional neighbor
at height $H$ cannot be $t$, and an exceptional neighbor at height $0$
cannot be $s$. If $st$ is an edge, it is among the other incident edges
in each applicable boundary condition, so both extensions of that edge
are protected by the strict column inequalities. Distinct edge columns
exclude intersections between edge interiors, and the new terminal bars
lie outside all other bars. Finally translate upward by $1$. The two
terminal heights become $0,H+2$, and all other heights belong to
$[1,H+1]$.
\end{proof}

The lower-terminal change is illustrated in \cref{fig:release}.
\begin{figure}[htbp]
  \centering
  \includegraphics[width=0.9\linewidth]{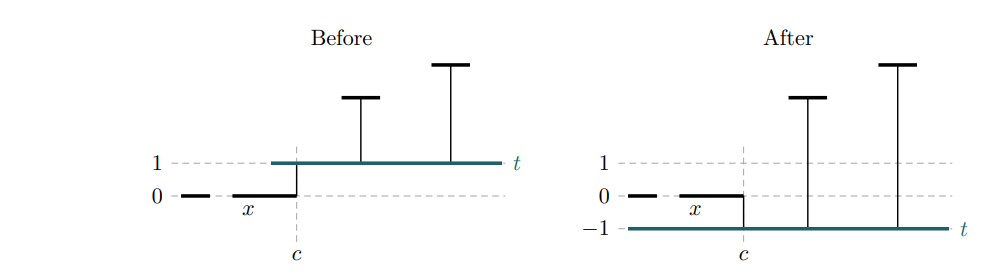}
\caption{The local lower boundary in \cref{lem:release}, before the
final upward translation. Every height-$0$ bar ends at or before $c$,
and the bar of $x$ ends at $c$. Every edge incident with $t$
other than $tx$ lies in a column strictly to the right of $c$.
The bar of $t$ can therefore be placed at height $-1$, with its
incident edges reconnected in their original columns.
The upper boundary is omitted.}
\label{fig:release}
\end{figure}

After \cref{lem:release}, the distance between the terminal bars can
be increased by inserting a horizontal strip at a height containing
no bar and extending the edges that cross it. Every nonterminal
bar remains at least one unit from both the upper and lower
boundaries, allowing the component to fit between any two terminal
heights with sufficiently large separation.

\subsection{The initial edge and parallel composition}
\label{subsec:parallel}

For one edge, use two bars spanning $[0,1]$ at heights $0,4$ and an edge
in column $1/2$. All five conditions hold.

Consider an edge component with at least one attached vertex. Its parallel
branches are the direct edge and the series components in
\eqref{eq:decomposition}. Order these branches as $C_1,\ldots,C_p$ so that
$m(C_1)\ge\cdots\ge m(C_p)$, where $p\ge2$. Let $H_i$ be the height
of the recursively constructed representation of $C_i$, and define
\begin{equation}
  H\coloneqq\max\{H_1,H_2+4,\ldots,H_p+4\}.
  \label{eq:parallel-height}
\end{equation}
Increase the height of $C_1$ to $H$ by \cref{lem:enlarge}.
Denote its terminals by $s,t$.

For every $i\ge2$, apply \cref{lem:release} to $C_i$. By
\eqref{eq:parallel-height} and \ref{cond:bands},
\begin{equation}
  H_i+2\le H-2\le y_s-y_t.
  \label{eq:parallel-fit}
\end{equation}
Increase its terminal separation to $y_s-y_t$, place it in a disjoint
horizontal interval to the right of $C_1$, and identify its lower and upper
terminal bars with those of $t,s$, respectively. Extend the resulting two
terminal bars to a new right boundary beyond all component rectangles.

Every added nonterminal bar has height in $[y_t+1,y_s-1]$, so the extended
terminal bars meet no added nonterminal bar or nonincident edge. Inside
each new rectangle they coincide with its original terminal bars, and
between rectangles they pass through empty gaps. Different rectangles have
disjoint edge columns. Thus \ref{cond:proper} and \ref{cond:right} hold.

No added bar enters $(0,1)$ or $(H-1,H)$, and the terminal heights
do not change, proving \ref{cond:bands}. If $y_t=1$, no added bar is below
$t$ and every added edge incident with $t$ has its column strictly to the
right of the rectangle for $C_1$. Therefore \eqref{eq:lower-condition}
remains valid. The argument for \eqref{eq:upper-condition} is identical.
This proves all five conditions for the parallel composition, including
its direct terminal edge, which occurs once. The case with both
terminals on the boundary is shown in \cref{fig:compositions}(a).

\subsection{Series composition}
\label{subsec:series}

Let $C=F_{sx}\cup F_{xt}$ be a series component with terminals $s,t$.
By exchanging the terminal names if necessary, assume
$m(F_{sx})\ge m(F_{xt})$. We keep the recursively constructed
representation of $F_{sx}$ with $s$ above $x$. Denote its height by $H_0$.
The other terminal ordering follows by vertical reflection.

\subsubsection{The smaller component is one edge}
\label{subsec:series-edge}

Suppose that $F_{xt}=xt$, and let $b\coloneqq y_x\in\{0,1\}$.
Keep the height $H_0$ and set $y_t\coloneqq1-b$. Choose a new column
$c>W$ and a new right boundary $W'>c$. Extend the bar of $x$ to end at
$c$, introduce
\begin{equation}
  B_t\coloneqq[c,W']\times\{1-b\},
  \label{eq:new-terminal}
\end{equation}
and represent $xt$ by the vertical segment in column $c$. Extend the bar
of $s$ to $W'$.

The new edge has length $1$, and all new parts lie to the right of the
rectangle for $F_{sx}$, so \ref{cond:proper}--\ref{cond:bands} hold. If
$y_t=1$, its only neighbor is $x$ at height $0$, whose bar ends at $c$;
all other height-$0$ bars end before $c$. Thus
\eqref{eq:lower-condition} holds. If $y_t=0$, no lower condition is
required. The upper condition remains valid because $x$ is below height
$H_0-1$, and neither its extension nor the new edge changes any bar at
height $H_0$ or any edge incident with $s$.

The case $y_x=0$ is illustrated in \cref{fig:compositions}(b).
\begin{figure}[htbp]
  \centering
  \includegraphics[width=0.9\linewidth]{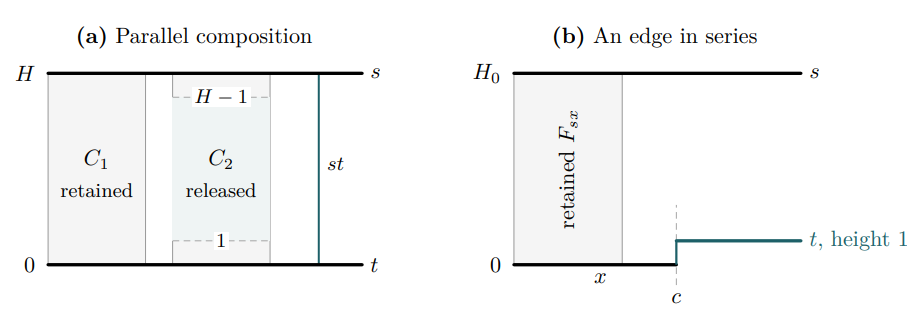}
 \caption{Two composition steps. In (a), the corresponding terminal bars of a largest branch and a released branch are merged, and the direct edge $st$ occupies a
separate column. The shaded middle region of $C_2$ contains its nonterminal
bars, with unit margins at the top and bottom. The case $y_s=H,y_t=0$
is shown. In (b), an edge is added in series when $y_x=0$, placing $t$ at
height $1$ without increasing the height $H_0$. Shaded rectangles represent
components; their internal bars and edges are omitted.}
\label{fig:compositions}
\end{figure}

\subsubsection{The smaller component has attached vertices}
\label{subsec:series-many}

Suppose now that $F_{xt}\ne xt$. Let $u_1,\ldots,u_k$ be the vertices
attached directly to $xt$, where $k\ge1$, and define
\begin{equation}
  A_i\coloneqq F_{xu_i},\qquad B_i\coloneqq F_{u_it}
  \quad(1\le i\le k).
  \label{eq:small-components}
\end{equation}
By \eqref{eq:decomposition},
\begin{equation}
  F_{xt}=xt\;\cup\;\bigcup_{i=1}^{k}(A_i\cup B_i).
  \label{eq:smaller-decomposition}
\end{equation}
The vertices $u_i$ are pairwise nonadjacent, since each was attached to
$xt$ and no step adds an edge between two existing vertices.

Recursively construct bar representations of all $A_i,B_i$,
denote their heights by $H(A_i),H(B_i)$, and define
\begin{equation}
  H\coloneqq\max\left\{H_0,
  5+\max_{1\le i\le k}\{H(A_i),H(B_i)\}\right\}.
  \label{eq:series-height}
\end{equation}
Increase the representation of $F_{sx}$ to height $H$. Let
$b\coloneqq y_x\in\{0,1\}$ and choose
\begin{equation}
  y_t\coloneqq1-b,\qquad
  z_i\coloneqq H-1-\frac{i}{k+1},\qquad
  y_{u_i}\coloneqq z_i\quad(1\le i\le k).
  \label{eq:apex-heights}
\end{equation}
Then
\begin{equation}
  H-2<z_k<\cdots<z_1<H-1.
  \label{eq:apex-order}
\end{equation}
Apply \cref{lem:release} to each $A_i,B_i$, orienting $x,t$ below $u_i$.
For $b'\in\{0,1\}$, \eqref{eq:series-height} and
\eqref{eq:apex-order} give
\begin{equation}
  z_i-b'>H-3
  \ge\max\{H(A_i),H(B_i)\}+2.
  \label{eq:series-fit}
\end{equation}
Thus each released component can be enlarged to the required terminal
separation. Place $A_i$ between heights $b,z_i$ and $B_i$ between heights
$1-b,z_i$.

Choose disjoint horizontal intervals in the order
\begin{equation}
  F_{sx},\ A_1,\ldots,A_k,\ \text{gap},\ B_k,\ldots,B_1.
  \label{eq:box-order}
\end{equation}
Choose a column $c$ in the central gap. Merge the copies of the bar of
$x$ in $F_{sx},A_1,\ldots,A_k$, extending the resulting bar to end at
$c$. Merge the copies of the bar of $t$ in $B_k,\ldots,B_1$, extending
the result leftward to $c$. Represent $xt$ by the vertical segment in
column $c$ between heights $0,1$. For each $i$, join the two copies of
the bar of $u_i$ across the interval between $A_i$ and $B_i$. Finally
extend the bars of $s,t$ to a new right boundary beyond all the rectangles.
The arrangement is illustrated in \cref{fig:series}.
\begin{figure}[htbp]
  \centering
  \includegraphics[width=0.9\linewidth]{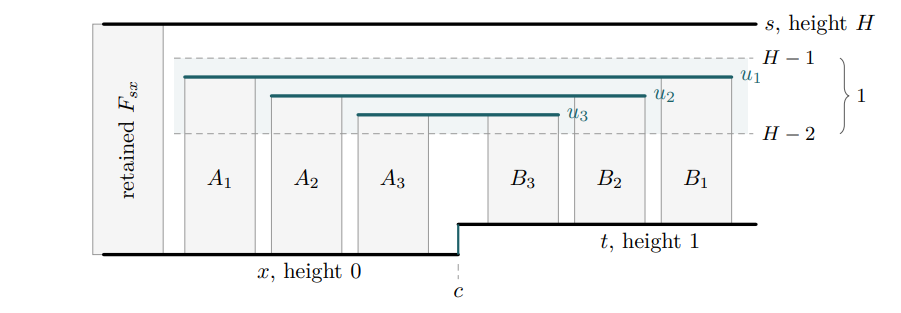}
  \caption{The remaining series step for $k=3$, $y_x=0$ and $y_s=H$.
The order of the $B_i$ is reversed, so the bar of $u_i$ passes above
every rectangle between $A_i$ and $B_i$. The highlighted interval has
height $1$ and contains all three heights $z_i$ from
\eqref{eq:apex-heights}. Its vertical scale is enlarged for clarity.
Each shaded rectangle represents a whole component, with internal bars
and edges omitted; the segment in column $c$ represents the new edge $xt$.}
\label{fig:series}
\end{figure}

We check the newly extended bars and the new edge. By
\eqref{eq:box-order}, the bar of $u_i$ passes between its own rectangles
only over rectangles $A_j,B_j$ with $j>i$. Their top heights satisfy
$z_j<z_i$ by \eqref{eq:apex-order}. All rectangles with $j<i$, as well
as the rectangle for $F_{sx}$, lie outside this extension. Hence the extended
bar meets no nonincident bar or edge. The extended bars of different
$u_i$ have distinct heights.

The bar of $x$ crosses the new rectangles only along the lower terminal
bars of the $A_i$. Every other bar there is at least one unit higher.
The corresponding statement holds for $t$ and the $B_i$. A bar inside
an $A_i$ may be at the same height as $t$, but it lies strictly left of
$c$, where the bar of $t$ does not extend. Similarly, a bar inside a
$B_i$ at the height of $x$ lies strictly right of $c$. Between
rectangles, the extended bars cross empty gaps. The edge $xt$ lies in a fresh
column between heights $0,1$, whereas all bars crossing the central gap
at higher levels have heights greater than $H-2$. Since
\eqref{eq:series-height} gives $H\ge9$, these bars do not meet $xt$.
The height of $s$ is at least $H-1$, and every new rectangle ends below
$H-1$, so extending the bar of $s$ is also valid.

Every edge inherited from a child still has endpoint-height
difference at least $1$, while the new edge $xt$ has difference $1$. All columns remain
distinct because child rectangles have disjoint horizontal intervals and
column $c$ is new. The rectangle for $F_{sx}$ still has bars at heights $0,H$.
This proves \ref{cond:proper}. Only $s,t$ reach the new right boundary,
proving \ref{cond:right}.

For \ref{cond:bands}, the terminal heights have the required values by
\eqref{eq:apex-heights}. A nonterminal bar inside a released child lies
at least one unit above its lower terminal and at least one unit below
its upper terminal. Together with \eqref{eq:apex-order}, this excludes new
bar heights in $(0,1)$ and $(H-1,H)$.

If $y_t=1$, then $y_x=0$. Every height-$0$ bar belongs to the
representation of $F_{sx}$ or is part of the merged bar of $x$,
so it ends at or before $c$. The bar of $x$ ends at $c$. All other edges incident with
$t$ belong to the $B_i$, have columns strictly greater than $c$, and join
$t$ to bars above it. Therefore \eqref{eq:lower-condition} holds. If
$y_t=0$, no lower condition is required. Finally, no new bar is at
height $H$, and no new edge is incident with $s$. Thus
\eqref{eq:upper-condition} remains valid, completing the construction.

\subsection{The height bound}

\begin{proposition}
\label[proposition]{prop:height}
Every edge component and every series component with $m\ge1$ edges has
a representation satisfying \ref{cond:proper}--\ref{cond:top} with height
at most $4+5\lfloor\log_2m\rfloor$.
\end{proposition}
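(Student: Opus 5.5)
Write $h(m)\coloneqq4+5\floor{\log_2 m}$. We argue by strong induction on $m$, running simultaneously over edge components and series components, exactly in the order the construction in \cref{sec:construction} proceeds. The construction already establishes \ref{cond:proper}--\ref{cond:top} at each step, since \cref{lem:enlarge} and \cref{lem:release} preserve them. So the only task is to check that the heights given by \eqref{eq:parallel-height} and \eqref{eq:series-height} stay at most $h(m)$. The base case $m=1$ is the single edge of height $4=h(1)$. We also use that $h$ is nondecreasing, and that $h(m')\le h(m)-5$ whenever $m'\le m/2$, because then $\floor{\log_2 m'}\le\floor{\log_2 m}-1$.

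\emph{Parallel step.} Let the edge component $C=F_{ab}$ have $m$ edges and branches $C_1,\dots,C_p$ with $m(C_1)\ge\cdots\ge m(C_p)$ and $p\ge2$. Since $m(C_1)<m$, induction gives $H_1\le h(m(C_1))\le h(m)$. For $i\ge2$, the branches $C_1$ and $C_i$ have disjoint edge sets, so $m(C_i)\le m(C_1)$ gives $2m(C_i)\le m$. Hence $H_i+4\le h(m)-5+4<h(m)$, and \eqref{eq:parallel-height} yields $H\le h(m)$.

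\emph{Series step.} Let $C=F_{sx}\cup F_{xt}$ with $m(F_{sx})\ge m(F_{xt})$. In the case of \cref{subsec:series-edge}, the height is $H_0\le h(m(F_{sx}))\le h(m)$. In the case of \cref{subsec:series-many}, each $A_i,B_i$ is a proper subcomponent of $F_{xt}$, so $m(A_i),m(B_i)<m(F_{xt})\le m/2$. This gives $5+H(A_i)\le 5+h(m)-5=h(m)$, and likewise for $B_i$. Since $H_0\le h(m)$ as well, \eqref{eq:series-height} yields $H\le h(m)$. Only the halving $m(F_{xt})\le m/2$ is needed here, and it holds because the edge sets of $F_{sx}$ and $F_{xt}$ are disjoint.

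The main thing to verify is that the induction is well founded and the inequalities are strict where needed. Every recursively used component must have strictly fewer edges than $C$. In the parallel step with $p\ge2$, this holds for each $C_i$ because the other branches are nonempty. In the series step, it holds for $F_{sx}$ because $F_{xt}$ contains at least the edge $xt$. We also confirm that each component treated is an edge or series component, which follows from \eqref{eq:decomposition}. Finally, the maximum in each recurrence may exceed the height of a child, and \cref{lem:enlarge} allows enlarging to exactly that maximum, so the bound is attained as stated.
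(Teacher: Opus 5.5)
Your proof is correct and follows essentially the same argument as the paper: strong induction on $m$ with $4+5\lfloor\log_2 m\rfloor$, where the largest child enters each recurrence without an additive term, and every child with an additive term ($+4$ or $+5$) has at most $m/2$ edges, so its bound is $5$ smaller. Your inequality $h(m')\le h(m)-5$ for $m'\le m/2$ plays the role of the paper's identity $f(\lfloor m/2\rfloor)+5=f(m)$.
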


\begin{proof}
For positive integers $m$, define
\begin{equation}
  f(m)\coloneqq4+5\floor{\log_2m}.
  \label{eq:height-function}
\end{equation}
We use strong induction on $m$. The one-edge representation has height
$4=f(1)$. In each recursive step, the chosen largest component has fewer
than $m$ edges and contributes its height without an additive term.

For a parallel composition, every branch other than $C_1$
has at most $\lfloor m/2\rfloor$ edges, and its
contribution to the maximum in \eqref{eq:parallel-height} has an
additive term of $4$. In a series step with one edge, the height
does not increase. In the remaining series step, let
$m_1\coloneqq m(F_{sx})$ and $m_2\coloneqq m(F_{xt})$. Then
\begin{equation}
  m_1+m_2=m,\qquad m_2\le m_1,\qquad
  m(A_i),m(B_i)\le m_2\le\floor{m/2}.
  \label{eq:size-bound}
\end{equation}
Thus every term of the form $H(A_i)+5$ or $H(B_i)+5$ in
\eqref{eq:series-height} comes from a component with at most
$\lfloor m/2\rfloor$ edges.

For $m\ge2$, let $q\coloneqq\lfloor\log_2m\rfloor$. Then
\begin{align}
  2^q\le m<2^{q+1}
  &\quad\Longrightarrow\quad
  2^{q-1}\le\floor{m/2}<2^q,\notag\\
  \floor{\log_2 \left(\floor{m/2}\right)}&=q-1,\notag\\
  f(\floor{m/2})+5&=4+5(q-1)+5=f(m).
  \label{eq:halving}
\end{align}
The function $f$ is nondecreasing. By the induction hypothesis and
\eqref{eq:parallel-height}, \eqref{eq:series-height},
\eqref{eq:size-bound} and \eqref{eq:halving}, every resulting height is
at most
\begin{equation}
  \max\{f(m-1),f(\floor{m/2})+5\}=f(m).
  \label{eq:height-induction}
\end{equation}
This completes the induction.
\end{proof}

\begin{proof}[Proof of \cref{thm:main}]
The initial edge component is $G$ itself. By \eqref{eq:edge-count} and
\cref{prop:height}, it has a bar representation of height at most
$4+5\lfloor\log_2(2n-3)\rfloor$, with endpoint-height difference at least
$1$ along every edge. Both conclusions of \cref{thm:main} follow from \cref{lem:conversion}.
\end{proof}

\section{Consequences and the role of the heights}
\label{sec:consequences}

We first deduce the lower bound from the known construction in
\cite{BFL2021}, without repeating its geometric proof. Let $G_0$ be a
triangle, whose three edges are created at stage $0$.
At each stage $h\ge1$, add five vertices to every edge created
at stage $h-1$, each adjacent to both endpoints of that edge. Then
\begin{equation}
  |V(G_h)|=3+15\sum_{j=0}^{h-1}10^j
  =\frac{5\cdot10^h+4}{3}.
  \label{eq:lower-size}
\end{equation}
For an integer $r\ge1$, let $k\coloneqq\lceil2+2\log_2r\rceil$.
The proof of Theorem~1 in the conference version of
\cite{BFL2021} gives
\begin{equation}
  \rho(G_{k+4r})\ge r\qquad(r\ge2).
  \label{eq:known-lower}
\end{equation}

If $F$ is a subgraph of $G$ and has at least one edge, restricting a drawing
to $F$ gives
\begin{equation}
  r(D|_F)\le r(D),\qquad \rho(F)\le\rho(G).
  \label{eq:monotonicity}
\end{equation}
The inequality $r\le2^{r-1}$ holds for integers $r\ge1$ by induction,
since $r+1\le2r\le2^r$. Therefore
\begin{equation}
  k\le\lceil2+2(r-1)\rceil=2r,
  \qquad k+4r\le6r,
  \qquad \rho(G_{6r})\ge r,
  \label{eq:depth-lower}
\end{equation}
where the last inequality follows from \eqref{eq:known-lower} and
\eqref{eq:monotonicity} for $r\ge2$, and from $\rho(G_6)\ge1$ for $r=1$.

\begin{proof}[Proof of \cref{cor:order}]
For $n\ge3$, define
\begin{equation}
  r\coloneqq\floor{\frac16\log_{10} \left(\frac{3n-4}{5}\right)}.
  \label{eq:choose-r}
\end{equation}
If $r=0$, the lower bound follows from $R_2(n)\ge1$. If $r\ge1$, then
\eqref{eq:lower-size} and \eqref{eq:choose-r} give
\begin{equation}
  |V(G_{6r})|=\frac{5\cdot10^{6r}+4}{3}\le n.
  \label{eq:fit-order}
\end{equation}
Repeatedly add a new vertex whose only neighbors are the two endpoints of an existing edge until the graph has $n$ vertices.
The result is a $2$-tree containing $G_{6r}$, so
\eqref{eq:monotonicity} and \eqref{eq:depth-lower} imply $R_2(n)\ge r$.
The upper bound is \cref{thm:main}, proving \eqref{eq:two-bounds}.
The two bounds have logarithmic order, so $R_2(n)=\Theta(\log n)$.
\end{proof}

\begin{corollary}
\label[corollary]{cor:partial}
Let $G$ be a graph with $n\ge3$ vertices, at least one edge, and treewidth at most $2$. Then
\[
  \rho(G)\le4+5\floor{\log_2(2n-3)}.
\]
\end{corollary}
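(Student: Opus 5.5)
The plan is to embed $G$ as a subgraph of an $n$-vertex $2$-tree $G'$ and combine the monotonicity \eqref{eq:monotonicity} with \cref{thm:main}. Since $G$ has treewidth at most $2$ and $n\ge3$, I will use the standard fact that every graph of treewidth at most $2$ on at least two vertices is a spanning subgraph of a $2$-tree on the same vertex set; equivalently, it is a partial $2$-tree. If we then know $G\subseteq G'$ with $|V(G')|=n$ and $E(G)\ne\emptyset$, restricting any planar straight-line drawing of $G'$ to $G$ gives a planar straight-line drawing of $G$. Hence $\rho(G)\le\rho(G')\le4+5\lfloor\log_2(2n-3)\rfloor$.

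The main step is the spanning-supergraph fact. I would argue by induction on $n$ in a self-contained way, using that a graph of treewidth at most $2$ has a vertex of degree at most $2$. Indeed, such a graph has no $K_4$ minor, and a simple graph with minimum degree at least $3$ contains a $K_4$ minor. Let $v$ be such a vertex. For $n=3$, any graph on three vertices is a subgraph of the triangle, which is a $2$-tree. For $n\ge4$, delete $v$ and obtain a graph $G-v$ on $n-1\ge3$ vertices of treewidth at most $2$. By induction, $G-v$ is contained in a spanning $2$-tree $T$. The case where $G-v$ has no edges is also covered, because the induction statement does not require an edge.

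Next I reinsert $v$ as a vertex attached to some edge $ab$ of $T$ chosen so that $N_G(v)\subseteq\{a,b\}$. This step requires care. If $\deg v=2$ with neighbors $a,b$, then $ab$ need not be an edge of $T$, so the naive induction fails. I would fix this in the standard way: if $\deg v=2$, apply the induction instead to $G'\coloneqq G-v+ab$. Contracting the edge $va$ shows that $G'$ is a minor of $G$, so it still has treewidth at most $2$. Its $2$-tree then contains $ab$, and attaching $v$ to $ab$ gives the required $2$-tree. If $\deg v\le1$, attach $v$ to any edge of $T$ containing its neighbor, if there is one, or to any edge otherwise; every vertex of a $2$-tree on at least two vertices lies on an edge.

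The main obstacle I expect is justifying the degree-at-most-$2$ vertex and the minor-closure property cleanly. An alternative is to cite the classical characterization of partial $2$-trees, namely that the graphs of treewidth at most $2$ are exactly the subgraphs of $2$-trees. Once that is in place, the corollary follows from \cref{thm:main} in one line. Note that the number of vertices is unchanged, so the bound is the same expression in $n$.
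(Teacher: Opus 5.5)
Your proposal is correct and follows the paper's proof: complete $G$ to a $2$-tree on the same $n$ vertices, then apply \cref{thm:main} together with the monotonicity \eqref{eq:monotonicity}. Your induction through a vertex of degree at most $2$, using the minor $G-v+ab$ obtained by contracting $va$, supplies a self-contained proof of the standard completion fact that the paper uses without justification.
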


\begin{proof}
Complete $G$ to a $2$-tree on the same vertex set, and apply
\cref{thm:main} and \eqref{eq:monotonicity}.
\end{proof}

The construction controls the geometric height differences along edges,
but does not give a corresponding bound on the number of distinct levels
between their endpoints. This distinction is illustrated by
\eqref{eq:apex-heights}, where arbitrarily many pairwise nonadjacent
vertices $u_i$ are placed at distinct heights within an interval of length
less than $1$. Since their bars overlap in horizontal projection, these
heights cannot be identified without making the bars intersect, while
replacing all distinct bar heights by consecutive integer levels preserves
their order but need not give logarithmic edge span. Indeed, in the series
construction, all $k$ heights $z_i$ lie strictly between those of $x$ and
$s$, so this replacement gives the edge $sx$ span at least $k+1$.
The logarithmic bound on geometric height therefore does not establish
a corresponding bound on edge span in a weakly leveled drawing,
in agreement with the lower bounds in~\cite{Bekos2026}.

\noindent
\\
{\bf Acknowledgements.}
Yasmin Tousinejad was partially supported by 
the Wallenberg AI, Autonomous Systems and Software Program (WASP) funded by the Knut and Alice Wallenberg Foundation.


\begin{thebibliography}{15}

\bibitem{Bekos2026}
M.~A. Bekos, G.~Da Lozzo, F.~Frati, S.~Gupta, P.~Kindermann,
G.~Liotta, I.~Rutter and I.~G. Tollis,
\emph{Weakly leveled planarity with bounded span},
Theoretical Computer Science \textbf{1069} (2026), article 115811.
\href{https://doi.org/10.1016/j.tcs.2026.115811}
{doi:10.1016/j.tcs.2026.115811}.
Preprint \href{https://arxiv.org/abs/2409.01889v2}{arXiv:2409.01889v2}
(2024), Corollary~44 and Section~6.

\bibitem{BhattCosmadakis1987}
S.~N. Bhatt and S.~S. Cosmadakis,
\emph{The complexity of minimizing wire lengths in {VLSI} layouts},
Information Processing Letters \textbf{25}(4) (1987), 263--267.
\href{https://doi.org/10.1016/0020-0190(87)90173-6}
{doi:10.1016/0020-0190(87)90173-6}.

\bibitem{Biedl2011}
T.~Biedl,
\emph{Small drawings of outerplanar graphs, series-parallel graphs,
and other planar graphs},
Discrete \& Computational Geometry \textbf{45} (2011), 141--160.
\href{https://doi.org/10.1007/s00454-010-9310-z}
{doi:10.1007/s00454-010-9310-z}.

\bibitem{BFL2021}
V.~Bla\v{z}ej, J.~Fiala and G.~Liotta,
\emph{On edge-length ratios of partial $2$-trees},
International Journal of Computational Geometry \& Applications
\textbf{31} (2021), no.~2--3, 141--162.
\href{https://doi.org/10.1142/S0218195921500072}
{doi:10.1142/S0218195921500072}.
Conference-version preprint, \emph{On the edge-length ratio of $2$-trees},
\href{https://arxiv.org/abs/1909.11152v3}{arXiv:1909.11152v3}
(2020), Theorem~1 and Corollary~3.

\bibitem{BorrazzoFrati2020}
M.~Borrazzo and F.~Frati,
\emph{On the planar edge-length ratio of planar graphs},
Journal of Computational Geometry \textbf{11} (2020), no.~1, 137--155.
\href{https://doi.org/10.20382/jocg.v11i1a6}
{doi:10.20382/jocg.v11i1a6}.

\bibitem{Chen2011}
J.~Chen, A.~Jiang, I.~A. Kanj, G.~Xia and F.~Zhang,
\emph{Separability and topology control of quasi unit disk graphs},
Wireless Networks \textbf{17}(1) (2011), 53--67.
\href{https://doi.org/10.1007/s11276-010-0264-0}
{doi:10.1007/s11276-010-0264-0}.

\bibitem{DiBattista1999}
G.~Di~Battista, P.~Eades, R.~Tamassia and I.~G. Tollis,
\emph{Graph Drawing: Algorithms for the Visualization of Graphs},
Prentice-Hall, 1999.

\bibitem{EadesWormald1990}
P.~Eades and N.~C. Wormald,
\emph{Fixed edge-length graph drawing is {NP}-hard},
Discrete Applied Mathematics \textbf{28}(2) (1990), 111--134.
\href{https://doi.org/10.1016/0166-218X(90)90110-X}
{doi:10.1016/0166-218X(90)90110-X}.

\bibitem{LazardLenhartLiotta2019}
S.~Lazard, W.~J. Lenhart and G.~Liotta,
\emph{On the edge-length ratio of outerplanar graphs},
Theoretical Computer Science \textbf{770} (2019), 88--94.
\href{https://doi.org/10.1016/j.tcs.2018.10.002}
{doi:10.1016/j.tcs.2018.10.002}.

\bibitem{PachToth2004}
J.~Pach and G.~T\'oth,
\emph{Monotone drawings of planar graphs},
Journal of Graph Theory \textbf{46} (2004), no.~1, 39--47.
\href{https://doi.org/10.1002/jgt.10168}{doi:10.1002/jgt.10168}.
Revised version \href{https://arxiv.org/abs/1101.0967}
{arXiv:1101.0967} (2011), Theorem~2.

\end{thebibliography}
\end{document}